    \def\qed{\hfill$\sqcap\kern-8.0pt\hbox{$\sqcup$}$\\}
    \def\beq{\begin{eqnarray}}
    \def\eeq{\end{eqnarray}}
    \def\beqq{\begin{eqnarray*}}
    \def\eeqq{\end{eqnarray*}}
    \def\zz{{\mathbb Z}}
    \def\q{{\mathbb Q}}
    \def\re{\textnormal {Re}}
    \def\im{\textnormal {Im}}
    \def\p{{\mathbb P}}
    \def\e{{\mathbb E}}
    \def\r{{\mathbb R}}
    \def\aa{\mathcal A}
    \def\mm{{\mathcal M}}
    \def\c{{\mathbb C}}
    \def\cc{\mathcal C}
    \def\d{{\textnormal d}}
    \def\i{{\textnormal i}}
	\newtheorem{theorem}{Theorem}
	\newtheorem{lemma}{Lemma}
	\newtheorem{proposition}{Proposition}
	\newtheorem{definition}{Definition}
\title{A convergent series representation for the density of the supremum of a stable process}
\author{F. Hubalek 
\\ \\
Financial and Actuarial Mathematics \\
Vienna University of Technology \\
Wiedner Hauptstra\ss e 8 / 105-1 \\
A-1040 Vienna, Austria \\
\and 
A. Kuznetsov
\thanks{{Research supported by the
Natural Sciences and Engineering Research Council of Canada.}}  \\ \\
Dept. of Mathematics and Statistics\\  York University \\
4700 Keele Street 
\\Toronto, ON \\ M3J 1P3,  Canada 
 }
\date{current version: January 10, 2011}
\begin{document}

\maketitle

\begin{abstract}
\bigskip
We study the density of the supremum of a strictly stable L\'evy process. We prove that 
for almost all values of the index $\alpha$ -- except for a dense set of Lebesgue measure zero -- the asymptotic series which were obtained in
\cite{Kuz2010} are in fact absolutely convergent series representations for the density of the supremum.
\end{abstract}

{\vskip 0.5cm}
 \noindent {\it Keywords}: stable processes, supremum, Mellin transform, double Gamma function, Liouville numbers, continued fractions 
{\vskip 0.5cm}
 \noindent {\it 2000 Mathematics Subject Classification }: 60G52 

\newpage


\section{Introduction}


 Let $X$ be an  $\alpha$-stable process $X$ indexed by parameters $(\alpha,\rho)$, where as usual $\rho=\p(X_1>0)$. For the definition and properties
of stable processes and stable distributions we refer to \cite{Bertoin}, \cite{Kyprianou}, \cite{Zolotarev1986} and \cite{Zolotarev1957}.  
The admissible set of parameters $(\alpha,\rho)$ is defined as
\beqq
\aa=\{\alpha \in (0,1), \; \rho \in (0,1)\} \cup \{\alpha=1, \; \rho={\textstyle{\frac12}} \}  \cup \{\alpha\in(1,2), \; \rho \in [1-\alpha^{-1}, \alpha^{-1}]\}.
\eeqq
Note that we exclude the case when $\alpha \in (0,1)$ and $\rho=1$ \{$\rho=0$\}, as in this case the process $X$ \{$-X$\} is a subordinator and the distribution of extrema is trivial.  When $\alpha \in (1,2)$ and $\rho =1-\alpha^{-1}$ \{$\rho=\alpha^{-1}$\} the process $X$ is spectrally positive \{negative\}. In this case we have complete information about the distribution of extrema due to the work of Bingham \cite{Bingham1975}, Doney \cite{Doney2008}, Bernyk, Dalang and Peskir \cite{Bernyk2008} and Patie \cite{Patie2009}.

We are interested in the distribution of the supremum of the process $X$, defined as
\beqq
S_t=\sup\{X_u\; : 0\le u \le t\}.
\eeqq
Note that due to the scaling property of stable processes, we have $S_t \stackrel{d}{=} t^{\frac{1}{\alpha}} S_1$, 
thus it is sufficient to study the distribution of $S_1$. The main object of interest in this paper will be the  probability density function 
\beqq
 p(x)= \frac{\d}{\d x} \p(S_1 \le x), \;\;\; x>0.
\eeqq

The complete description of the asymptotic behavior of $p(x)$ as $x\to 0^+$ and as $x\to +\infty$ is provided by the following Theorem, which was proved in \cite{Kuz2010}. This Theorem generalizes results for the spectrally one-sided case obtained by Doney \cite{Doney2008} and Patie \cite{Patie2009} and the results on the two-sided case, which were proved by Doney and Savov \cite{Doney2010}.

\begin{theorem}(Theorem 9, \cite{Kuz2010})\label{thm1}  Assume that $\alpha \notin \q$.  
Define sequences $\{a_{m,n}\}_{m\ge 0,n\ge 0}$ and  $\{b_{m,n}\}_{m\ge 0,n\ge 1}$ as
 \beq\label{def_a_mn}
a_{m,n}=\frac{(-1)^{m+n} }{\Gamma\left(1-\rho-n-\frac{m}{\alpha}\right)\Gamma(\alpha\rho+m+\alpha n)}
\prod\limits_{j=1}^{m} \frac{\sin\left(\frac{\pi}{\alpha} \left( \alpha \rho+ j-1 \right)\right)} {\sin\left(\frac{\pi j}{\alpha} \right)} 
\prod\limits_{j=1}^{n} \frac{\sin(\pi \alpha (\rho+j-1))}{\sin(\pi \alpha j)},
\eeq
\beq\label{def_b_mn}
b_{m,n}=\frac{\Gamma\left(1-\rho-n-\frac{m}{\alpha}\right)\Gamma(\alpha\rho+m+\alpha n) }{\Gamma\left(1+n+\frac{m}{\alpha}\right)\Gamma(-m-\alpha n)}
a_{m,n}.
\eeq
Then we have the following asymptotic expansions:
\beq \label{eqn_p_0_asympt}
p(x) &\sim& x^{\alpha\rho-1} \sum\limits_{n\ge 0} \sum\limits_{m\ge 0} a_{m,n} x^{ m+\alpha n}, \;\;\; x\to 0^+,  \\
\label{eqn_p_infty_asympt}
p(x) &\sim& x^{-1-\alpha } \sum\limits_{n\ge 0} \sum\limits_{m\ge 0}b_{m,n+1} x^{-m-\alpha n}, \;\;\; x\to +\infty.
\eeq 
\end{theorem}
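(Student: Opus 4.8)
The plan is to recover both expansions from a single analytic object: the Mellin transform of the supremum,
\[
\mm(s) = \e\left[ S_1^{s-1}\right],
\]
together with its meromorphic continuation. First I would use the Wiener--Hopf factorization of $X$ to obtain an explicit formula for $\mm(s)$: running the process up to an independent exponential time, exploiting the scaling property $S_t \stackrel{d}{=} t^{1/\alpha}S_1$, and inverting in the time variable expresses the Mellin transform of $S_1$ through the positive Wiener--Hopf factor, which for a stable process is known in closed form. The resulting expression can be written as a ratio of products of Barnes double Gamma functions (with the two "periods" $1$ and $\alpha$), so that $\mm$ extends to a meromorphic function on $\c$ whose poles and zeros sit on the two-dimensional lattice generated by $1$ and $\alpha$. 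One checks that $\mm$ is analytic in the fundamental strip $1-\alpha\rho < \re(s) < 1+\alpha$, the edges of which already encode the leading exponents $\alpha\rho-1$ and $-1-\alpha$.

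The core of the argument is then Mellin inversion,
\[
p(x) = \frac{1}{2\pi \i} \int_{c - \i \infty}^{c + \i \infty} \mm(s) \, x^{-s} \, \d s, \qquad 1-\alpha\rho < c < 1+\alpha .
\]
To read off the behaviour as $x \to 0^+$ I would push the contour to the left, collecting the residues at the poles $s = 1 - \alpha\rho - m - \alpha n$ for $m,n \ge 0$; each simple pole contributes a monomial $x^{\alpha\rho - 1 + m + \alpha n}$ whose coefficient is, up to sign, the residue of $\mm$ there. For the behaviour as $x \to +\infty$ I would push the contour to the right and collect the mirror family of poles, producing the powers $x^{-1-\alpha-m-\alpha n}$ of \eqref{eqn_p_infty_asympt}. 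In each case the residue sum is the candidate series and the shifted contour integral is the remainder.

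The bookkeeping step is to verify that these residues are exactly $a_{m,n}$ and $b_{m,n+1}$. This is where the explicit shape of the coefficients is forced: the products of sines in \eqref{def_a_mn} are generated by the reflection and quasi-periodicity functional equations of the double Gamma function as one walks from one lattice pole to the next, while the two ordinary Gamma factors package the remaining single-Gamma contributions. The symmetry relation \eqref{def_b_mn} linking $b_{m,n}$ to $a_{m,n}$ should emerge from the reflection identity exchanging the two half-planes. I would compute the $m=n=0$ residue first to fix the normalization, then propagate to general $(m,n)$ using the functional equations rather than evaluating each residue from scratch.

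The main obstacle is not the formal residue calculus but the justification that these are genuine asymptotic expansions, i.e.\ controlling the remainder after the contour has been moved a finite distance. This requires uniform bounds on $\mm(\sigma + \i t)$ as $|t| \to \infty$ along each vertical line, which I would derive from the Stirling-type asymptotics of the double Gamma function; these show that $\mm$ decays rapidly enough on the shifted contour that the remainder integral is of strictly smaller order than the last retained term, giving the relation $\sim$. Finally, the hypothesis $\alpha \notin \q$ is essential: it guarantees that the lattice points $1 - \alpha\rho - m - \alpha n$ are pairwise distinct, so all poles are simple and no confluent poles appear. Were $\alpha$ rational, colliding poles would produce double poles, and hence logarithmic terms and derivatives of the Gamma factors, which would change the form of the expansion.
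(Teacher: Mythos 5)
Your proposal follows essentially the same route as the source proof (Theorem 9 in \cite{Kuz2010}, which this paper only cites and then re-uses in its own Theorem \ref{thm_main}): express $\mm(s)=\e[S_1^{s-1}]$ via Wiener--Hopf factorization in terms of double gamma functions, continue it meromorphically with simple poles at $s^{\pm}_{m,n}$ whose residues are $a_{m,n}$ and $-b_{m-1,n}$, and then shift the contour in the inverse Mellin integral left for $x\to 0^+$ and right for $x\to+\infty$, controlling the remainder by the exponential decay of $\mm$ along vertical lines. Your observations that the coefficients arise from the functional equations of the double gamma function and that $\alpha\notin\q$ is needed to keep the poles simple (otherwise logarithmic terms appear) are exactly the points the paper itself emphasizes.
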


There is a very important subclass of stable processes, for which the above result can be considerably strengthened. The following family of stable processes was first introduced by Doney \cite{Doney1987}:
\begin{definition}
For $k,l \in \zz$ define $\cc_{k,l}$ as the class of stable processes with parameters $(\alpha,\rho) \in \aa$ satisfying 
\beq\label{eqn_def_Ckl}
\rho+k=\frac{l}{\alpha}.
\eeq 
\end{definition} 
It turns out (see Theorem 10 in \cite{Kuz2010}), that when a process $X \in \cc_{k,l}$ for some integers $k$ and $l$, then 
the coefficients $a_{m,n}$ and $b_{m,n}$ can be simplified so that they involve 
only finite products of lentgh not greater than $k$ or $l$. Also, in this case the coefficients are well defined even for rational $\alpha$. Moreover, when $\alpha \in (0,1)$ \{$\alpha \in (1,2)$\}, the series in the right-hand side of  (\ref{eqn_p_infty_asympt}) \{ (\ref{eqn_p_0_asympt}) \} converges to $p(x)$ for all $x>0$.

It is not hard to see that parameters $(\alpha,\rho) \in \aa$ which satisfy relation (\ref{eqn_def_Ckl}) form a dense set of Lebesgue measure zero. Therefore, Theorem 10 in \cite{Kuz2010} gives a convergent series representation for $p(x)$ for a dense set of parameters. The main goal of this paper is to extend this result and to prove that in fact we have a convergent series representation for $p(x)$ for {\it almost all} parameters $(\alpha,\rho)$, except when $\alpha \in \q$ or 
when $\alpha$ can be approximated by rational numbers extremely well.


\section{Main Results}


The following set of real transcendental numbers, which can be approximated by rational numbers extremely well, was introduced in \cite{Kuz2010}. 

\begin{definition}\label{def_set_L}
 Let ${\mathcal L}$ be the set of all real irrational numbers $x$, for which there exists a constant $b>1$ such that the inequality
\beq\label{eqn_def_set_L}
 \bigg| x -\frac{p}{q} \bigg| < \frac{1}{b^{q}}
\eeq 
is satisfied for infinitely many integers $p$ and $q$.
\end{definition}

It is clear that ${\mathcal L}$ is a proper subset of the set of Liouville numbers, which are defined by the following, weaker condition: for all $n\ge 1$ the 
inequality
\beqq
 \bigg| x-\frac{p}{q} \bigg| < \frac{1}{q^{n}}
\eeqq
is satisfied for infinitely many integers $p$ and $q$.  A celebrated result by Liouville states
 that any algebraic number is not a Liouville number, but this is also true for many other numbers. 
In fact, almost every number is not a Liouville number, 
as the set of Liouville numbers, while being dense in $\r$, has zero Lebesgue measure  (see Theorem 32 in \cite{Khinchin}). Therefore, the Lebesgue measure of 
 ${\mathcal L}$ is also zero.

As we will see later, the structure of the set ${\mathcal L}$ can be described in terms of continued fraction representation of real numbers. We present
here the essential results from the theory of continued fractions, which will be needed later. 

The continued fraction representation of  a real number $x$ is defined as
\beqq	
 x=[a_0;a_1,a_2,\dots]=a_0+\cfrac{1}{a_1+\cfrac{1}{a_2+ \dots }}
 \eeqq
where $a_0 \in {\mathbb Z}$ and $a_i \in {\mathbb N}$ for $i\ge 1$. For $x\notin \q$ the continued fraction has infinitely many terms; truncating it after $n$ steps gives us   a rational number $p_n/q_n=[a_0;a_1,a_2,...,a_n]$, which is called the $n$-th convergent.
The sequence of coprime integer numbers $p_n$ and $q_n$ can be computed recursively
\beq\label{continued_fraction_recurrence}
\begin{cases}
p_n&=a_n p_{n-1} + p_{n-2}, \;\;\; p_{-1}=1, \;\;\; p_{-2}=0, \\
q_n&=a_n q_{n-1} + q_{n-2}, \;\;\; q_{-1}=0, \;\;\; q_{-2}=1.
\end{cases}
\eeq
Theorem 17 in \cite{Khinchin} tells us that the $n$-th convergent $p_n/q_n$ provides the best rational approximation to $x$ in the following sense
\beqq
\bigg | x - \frac{p_n}{q_n} \bigg| =  \min \left\{ \bigg | x - \frac{p}{q} \bigg|\; :  \; p\in {\mathbb Z}, 1 \le q \le q_n \right\}.
\eeqq
There is also a converse result (see Theorem 19 in \cite{Khinchin}): 
if integers $p$ and $q$ satisfy 
\beq\label{converse_result}
\bigg  |x - \frac{p}{q} \bigg |< \frac{1}{2q^2},
\eeq
then $p=p_n$ and $q=q_n$ for some $n$. The error of the best rational approximation 
is bounded from above and below as follows 
\beq\label{eqn_nth_conv_error}
\frac{1}{q_n (q_n+q_{n+1})} <  \bigg | x - \frac{p_n}{q_n} \bigg| < \frac{1}{q_n q_{n+1}},
\eeq
see Theorems 9 and 13 in \cite{Khinchin}.

The next proposition gives us an insight into the arithmetic structure of the set ${\mathcal L}$. We will use the following notation: $\{x\}\in [0,1)$ denotes the 
fractional part of $x$, i.e. the distance from $x$ to the closest integer not greater than $x$;  and $\langle x \rangle =\min(\{x\},\{-x\})=\min(|x-n|: n \in {\mathbb Z})$, i.e. the distance from $x$ to the closest integer.  
\begin{proposition}\label{prop_set_L}

${}$
 \begin{itemize}
  \item[(i)] If $x\in {\mathcal L}$ then $z x \in {\mathcal L}$ and $z+x \in {\mathcal L}$ for all  $z \in {\mathbb Q}\setminus\{0\}$.
  \item[(ii)]  $x \in {\mathcal L}$ if and only if $x^{-1} \in {\mathcal L}$.
  \item[(iii)] Let $x=[a_0;a_1,a_2,\dots]$. Then $x \in {\mathcal L}$ if and only if there exists a constant $b>1$ such that the inequality
$a_{n+1}>b^{q_n}$ is satisfied for infinitely many $n$. 
 \item[(iv)] $x \notin {\mathcal L} \cup \q$ if and only if
\beq\label{set_L_lim_condition}
 \lim\limits_{q\to +\infty} \frac{\ln \langle qx \rangle }{q} =0.
\eeq 
 \end{itemize}
\end{proposition}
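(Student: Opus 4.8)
The plan is to treat the four parts separately, deriving (i) and (ii) directly from the defining inequality (\ref{eqn_def_set_L}) by tracking how denominators and error bounds transform, and then using the continued fraction estimates (\ref{continued_fraction_recurrence})--(\ref{eqn_nth_conv_error}) together with the converse approximation result (\ref{converse_result}) for part (iii), while (iv) can be obtained directly from the definition.

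For part (i), I would write $z=a/c$ with $a\in\zz\setminus\{0\}$ and $c\in\mathbb{N}$. If $|x-p/q|<b^{-q}$ holds for infinitely many pairs $(p,q)$, then for the translate $z+x$ I use the approximant $(aq+cp)/(cq)$, whose denominator is $cq$ and whose error equals $|x-p/q|<b^{-q}=(b^{1/c})^{-cq}$; since $b^{1/c}>1$ this shows $z+x\in\mathcal{L}$. For the dilate $zx$ I use $ap/(cq)$, with error $|z|\,|x-p/q|<|z|(b^{1/c})^{-cq}$, and absorb the constant $|z|$ by passing to any base $b'$ with $1<b'<b^{1/c}$, which is valid for all but finitely many pairs. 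Part (ii) is symmetric in $x$ and $x^{-1}$, so it suffices to prove one implication: the map $p/q\mapsto q/p$ gives $|x^{-1}-q/p|=\frac{q}{|p|\,|x|}\,|x-p/q|$, and since $p/q\to x$ forces $|p|\sim|x|q$, the prefactor is bounded while the new denominator $|p|$ is comparable to $q$; tracking the resulting change of base and absorbing the constant exactly as above yields $x^{-1}\in\mathcal{L}$.

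The heart of the argument is part (iii). For the ``if'' direction, $a_{n+1}>b^{q_n}$ gives $q_{n+1}=a_{n+1}q_n+q_{n-1}>b^{q_n}q_n$, so the upper bound in (\ref{eqn_nth_conv_error}) yields $|x-p_n/q_n|<(q_nq_{n+1})^{-1}<b^{-q_n}$, placing $x$ in $\mathcal{L}$. For the ``only if'' direction I would first reduce the approximating fractions in (\ref{eqn_def_set_L}) to lowest terms, which preserves the base since passing from $q$ to a divisor $q'\le q$ keeps $|x-p'/q'|<b^{-q}\le b^{-q'}$; I then note that $b^{-q}<(2q^2)^{-1}$ for large $q$, so by the converse result (\ref{converse_result}) every sufficiently good approximant is a convergent $p_n/q_n$. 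Feeding $|x-p_n/q_n|<b^{-q_n}$ into the lower bound of (\ref{eqn_nth_conv_error}) gives $q_n(q_n+q_{n+1})>b^{q_n}$, and combining with $q_{n+1}<(a_{n+1}+1)q_n$ produces $a_{n+1}>b^{q_n}/(2q_n^2)-1$; since $(b/b')^{q_n}/(2q_n^2)\to\infty$ for any $1<b'<b$, this forces $a_{n+1}>(b')^{q_n}$ for infinitely many $n$. The main obstacle here is the bookkeeping that guarantees the relevant approximants really are convergents and that the polynomial factors $q_n$ and $q_n^2$ are swallowed by an arbitrarily small decrease of the base.

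Finally, for part (iv) I would argue directly with the distance function, setting $M(q)=-\ln\langle qx\rangle/q$. For rational $x$ the quantity $\langle qx\rangle$ vanishes infinitely often and the limit (\ref{set_L_lim_condition}) fails, so it remains to treat irrational $x$, for which $\langle qx\rangle\le\tfrac12$ gives $M(q)>0$; hence (\ref{set_L_lim_condition}) holds precisely when $\limsup_{q}M(q)=0$. If $x\in\mathcal{L}$, choosing the nearest integer to $qx$ gives $\langle qx\rangle<qb^{-q}$ for infinitely many $q\to\infty$, whence $M(q)>\ln b-\ln q/q$ along this subsequence and $\limsup_{q}M(q)\ge\ln b>0$. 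Conversely, if $\limsup_{q}M(q)>0$ there are $\delta>0$ and infinitely many $q$ with $\langle qx\rangle<e^{-\delta q}$; taking the nearest integer $p$ then gives $|x-p/q|<e^{-\delta q}=(e^{\delta})^{-q}$, so $x\in\mathcal{L}$. Thus the limit in (\ref{set_L_lim_condition}) vanishes exactly for irrational $x\notin\mathcal{L}$, which is the assertion.
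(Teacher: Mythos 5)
Your proposal is correct, and its core --- part (iii) --- follows essentially the same route as the paper: both directions rest on the recurrence (\ref{continued_fraction_recurrence}), the two-sided error bound (\ref{eqn_nth_conv_error}), and the converse result (\ref{converse_result}), with the same final trick of sacrificing a little of the base $b$ to absorb the polynomial factors $q_n^2$ (the paper uses $\sqrt{b}$ where you use an arbitrary $b'<b$). Where you genuinely diverge is in parts (ii) and (iv). The paper obtains (ii) as a corollary of (iii) via the continued-fraction shift $x^{-1}=[0;a_0,a_1,\dots]$ (valid for $x>1$), whereas you prove it directly from Definition \ref{def_set_L} by mapping approximants $p/q\mapsto q/p$ and rebasing; your argument is more elementary and self-contained, and it sidesteps the bookkeeping the paper's one-line claim actually requires (how the denominators of convergents transform under inversion, and the reduction to $x>1$). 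For (iv), the paper proves only the direction $x\notin{\mathcal L}\cup\q\Rightarrow$ (\ref{set_L_lim_condition}) and leaves the converse to the reader; you prove both directions, cleanly via the contrapositive with $\limsup$ of $-\ln\langle qx\rangle/q$, and you explicitly dispose of the rational case (where $\langle qx\rangle$ vanishes infinitely often), so your treatment of (iv) is the more complete one. A further small improvement on the paper occurs inside your ``only if'' argument for (iii): you first reduce the approximating fractions to lowest terms before invoking (\ref{converse_result}); this is needed because the convergents $(p_n,q_n)$ are coprime, so a non-reduced pair satisfying (\ref{eqn_def_set_L}) cannot literally coincide with a convergent --- a detail the paper glosses over. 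The only points you leave implicit (harmlessly) are that infinitely many admissible pairs force $q\to\infty$, and that distinct original pairs could in principle reduce to the same lowest-terms fraction --- ruled out since $x$ is irrational.
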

\begin{proof}
Statement (i) follows immediately from the Definition \ref{def_set_L}.
 Statement (ii) also can be derived from the Definition \ref{def_set_L}, however it easily follows from (iii) due to the following simple
property of continued fractions: if $x > 1$ and 
$x=[a_0;a_1,a_2,\dots]$, then $x^{-1} = [0;a_0,a_1,\dots]$. Thus we only need to prove (iii) and (iv).

Let us prove the ``if'' part of (iii). Assume that there exists $b>1$ such that 
\beq\label{ineq_an_cbn}
a_{n+1}> b^{q_n}
\eeq
 for infinitely many $n$. Let us consider such an index $n$. 
Using (\ref{ineq_an_cbn}) and the recurrence relation (\ref{continued_fraction_recurrence}) we find that $q_{n+1}>  q_n a_{n+1} > q_n b^{ q_n}$, which together 
wih (\ref{eqn_nth_conv_error}) implies that
\beqq
 \bigg| x - \frac{p_n}{q_n} \bigg | < \frac{1}{q_n q_{n+1}} < \frac{1}{q_n^2  b^{q_n} } < \frac{1}{b^{q_n}}.
\eeqq
Since the above inequality is true for infinite many $n$, we conclude that $x \in {\mathcal L}$. 

Now, let us prove the ``only if'' part of (iii). Let us assume that $x \in {\mathcal L}$. Then there exists a constant $b>1$ such that the inequality (\ref{eqn_def_set_L}) is satisfied for infinitely 
many pairs $(p,q)$. When $q$ is large enough we have $b^{-q}<1/(2q^2)$, thus $p$ and $q$ satisfy (\ref{converse_result}) and we 
 conclude that the pair $(p,q)$ must coincide with some continued fraction convergent $(p_n,q_n)$. 
Therefore, from (\ref{eqn_nth_conv_error}) we find
\beqq
 \frac{1}{q_n ( q_n+q_{n+1})} < \bigg | x - \frac{p_n}{q_n} \bigg | < \frac{1}{b^{q_n}}.
\eeqq
Using the recurrence relation (\ref{continued_fraction_recurrence}) and the above inequality we conclude that 
\beqq
 \frac{1}{q_n^2 ( a_{n+1} + 2)} <  \frac{1}{q_n ( q_n + a_{n+1} q_n + q_{n-1})}= \frac{1}{q_n ( q_n+q_{n+1})} < \frac{1}{b^{q_n}},
\eeqq
therefore
\beqq
a_{n+1} > \frac{b^{q_n}}{q_n^2}-2.
\eeqq
It is clear that if $q_n$ is large enough then
\beqq
\frac{b^{q_n}}{q_n^2}-2 > {(\sqrt{b})^{q_n}},
\eeqq
Thus we have found infinitely many indices $n$ such that that $a_{n+1} > (\sqrt{b})^{q_n}$. This ends the proof of (iii).

Let us prove the ``if'' part of (iv). Since $x \notin {\mathcal L} \cup \q$ we know that for every $b>1$, all $q \in N$ sufficiently large 
and all $p \in {\mathbb Z}$ we have
\beqq
 |qx-p|>\frac{q}{b^q}>\frac{1}{b^q},  
\eeqq
therefore $\ln \langle qx \rangle > - q \ln(b)$ for all $q$ sufficiently large, which shows that 
\beqq
 \liminf\limits_{q\to +\infty}  \frac{\ln \langle qx \rangle }{q} \ge 0.
\eeqq 
Since we also have $\ln \langle qx \rangle / q < 0$ for all $q$, it implies condition (\ref{set_L_lim_condition}). The ``only if'' part of (iv) can be verified in
exactly the same way and  we leave the details to the reader.
\end{proof}

Proposition \ref{prop_set_L} shows  that the set ${\mathcal L}$ has quite an interesting structure. First of all, property (iii) gives us  a simple method to
construct a number $x \in {\mathcal L}$ (just define recursively $a_{n+1}=2^{q_n}$), therefore this set is not empty.  The set ${\mathcal L}$ is closed under addition and multiplication by rational numbers,
therefore it is dense in $\r$. It is a subset of the set of Liouville numbers, therefore it has Lebesgue measure zero.  In fact, the Hausdorff dimension of the set
of Liouville numbers is also zero (see Theorem 2.4 in \cite{Oxtoby}), and of course the same is true for ${\mathcal L}$. 

We would like to stress that 
the elements of the set ${\mathcal L}$ are rather unusual transcendental numbers with quite extreme arithmetic properties.  
The reason for this lies in propety (iii) of the 
above proposition. Assume that $x$ is an irrational number. Theorem 12 in \cite{Khinchin} tells us that 
for every irrational number $x$
\beqq
q_n \ge 2^{\frac{n-1}2}, \;\;\; n\ge 2,
\eeqq
 therefore using Proposition \ref{prop_set_L}, (iii) we find that for every number $x \in {\mathcal L}$ there exists a constant $b>1$ such that the inequality
\beq\label{increase_set_L}
a_n>b^{(\sqrt{2})^n}
\eeq
is satisfied for infinitely many $n$. Yet one can prove that for all $x$ except a set of Lebesgue measure zero and all $b>1$ 
\beq\label{increase_general}
a_n = O(b^n), \;\;\; n\to +\infty. 
\eeq 
This result follows from the famous L\'evy-Khinchin theorem, which states that for almost all $x$ it is true that $\sqrt[n]{q_n} \to \gamma$ as 
$n\to +\infty$, where $\gamma = \exp ( \pi^2/(12 \ln(2)))$ (see Theorem 31 and the footnote on page 66 in \cite{Khinchin}). 
We see now that there is a very large gap between the growth rate of coefficients $a_n$ for elements of ${\mathcal L}$ given by (\ref{increase_set_L}) and
almost all other real numbers given by (\ref{increase_general}). This shows that the elements of the set ${\mathcal L}$ should be rather exceptional numbers.

Our main result in this paper is the following Theorem, which gives an absolutely convergent series representation for $p(x)$ for every irrational $\alpha$ which is not in the set ${\mathcal L}$. 

\begin{theorem}\label{thm_main}
Assume that $\alpha \notin {\mathcal L} \cup \q$. Then for all $x>0$ 
\begin{equation}
p(x)=
\begin{cases} \label{eqn_p_0_infty}
 & \displaystyle x^{-1-\alpha } \sum\limits_{n\ge 0} \sum\limits_{m\ge 0}b_{m,n+1} x^{-m-\alpha n}, \;\; {\textnormal { if }} \alpha \in (0,1), \\ \\
 & \displaystyle x^{\alpha\rho-1} \sum\limits_{n\ge 0} \sum\limits_{m\ge 0} a_{m,n} x^{ m+\alpha n}, \;\;\;\;\;\;\;  {\textnormal { if }} \alpha \in (1,2),
\end{cases}
\end{equation}	
where $a_{m,n}$ and $b_{m,n}$ are defined by (\ref{def_a_mn}) and (\ref{def_b_mn}).
\end{theorem}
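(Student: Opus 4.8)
The plan is to upgrade the asymptotic expansions of Theorem~\ref{thm1} to genuine convergent series by establishing quantitative bounds on the coefficients $a_{m,n}$ and $b_{m,n}$. I will describe the case $\alpha\in(1,2)$ with the $a_{m,n}$-series; the case $\alpha\in(0,1)$ is entirely parallel, since by (\ref{def_b_mn}) the coefficient $b_{m,n}$ is $a_{m,n}$ times a ratio of Gamma functions and hence carries exactly the same sine products as $a_{m,n}$, now paired with a reciprocal ratio of Gamma functions (note also that $\alpha^{-1}\notin{\mathcal L}\cup\q$ by Proposition~\ref{prop_set_L}(ii)). It suffices to show that
\[
\sum_{m\ge 0}\sum_{n\ge 0}|a_{m,n}|\,x^{m+\alpha n}<\infty\quad\text{for every }x>0,
\]
for then the right-hand side of (\ref{eqn_p_0_infty}) converges locally uniformly to a function that is real-analytic on $(0,\infty)$, and it remains only to identify it with $p$.

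First I would separate each coefficient into a ``Gamma part'' and a ``sine part''. Applying the reflection formula $\Gamma(z)\Gamma(1-z)=\pi/\sin(\pi z)$ to $\Gamma(1-\rho-n-\tfrac{m}{\alpha})$ rewrites (\ref{def_a_mn}) as a bounded factor times
\[
\frac{\Gamma(\rho+n+\tfrac{m}{\alpha})}{\Gamma(\alpha\rho+m+\alpha n)}\;\prod_{j=1}^m\frac{\sin(\tfrac{\pi}{\alpha}(\alpha\rho+j-1))}{\sin(\tfrac{\pi j}{\alpha})}\;\prod_{j=1}^n\frac{\sin(\pi\alpha(\rho+j-1))}{\sin(\pi\alpha j)} .
\]
Writing $M=m+\alpha n$, so that $n+\tfrac{m}{\alpha}=M/\alpha$, Stirling's formula gives $\ln\big(\Gamma(\rho+M/\alpha)/\Gamma(\alpha\rho+M)\big)=-\tfrac{\alpha-1}{\alpha}M\ln M+O(M)$; since $\alpha>1$ and $M\asymp m+n$, the Gamma part decays super-exponentially, like $\exp(-c\,(m+n)\ln(m+n))$ with $c=(\alpha-1)/\alpha>0$. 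The numerators of the two products are bounded by $1$, so the entire problem is to control the denominators $\prod_{j=1}^m 1/|\sin(\pi j/\alpha)|$ and $\prod_{j=1}^n 1/|\sin(\pi\alpha j)|$.

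The heart of the matter, and the step I expect to be hardest, is the number-theoretic estimate that for $\beta\notin{\mathcal L}\cup\q$ and every $\varepsilon>0$ there is a constant $C_\varepsilon$ with
\[
\prod_{j=1}^n\frac{1}{|\sin(\pi\beta j)|}\le C_\varepsilon\,\exp(\varepsilon\,n\ln n),\qquad n\to\infty ,
\]
applied to $\beta=\alpha$ and $\beta=\alpha^{-1}$. Using $|\sin(\pi\beta j)|\ge 2\langle\beta j\rangle$ reduces this to bounding $\sum_{j\le n}\ln(1/\langle\beta j\rangle)$. A naive bound replacing every $\langle\beta j\rangle$ by $\min_{j\le n}\langle\beta j\rangle\ge e^{-\varepsilon n}$ (which holds by Proposition~\ref{prop_set_L}(iv)) only gives $O(\varepsilon n^2)$, which is too weak; the point is that $\langle\beta j\rangle$ is genuinely small for very few $j$. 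I would therefore group $1\le j\le n$ by the continued-fraction level of $\beta$, using the recurrence (\ref{continued_fraction_recurrence}) and the bounds (\ref{eqn_nth_conv_error}) together with the three-distance structure of $\{\,j\beta\,\}$: within a block $q_k\le j<q_{k+1}$ the smallest values of $\langle\beta j\rangle$ occur near multiples of $q_k$ and are of size $\approx i/q_{k+1}$, so these spikes contribute $\approx a_{k+1}\ln q_k$ while the remaining $j$ contribute $O(q_{k+1})$. The decisive input is Proposition~\ref{prop_set_L}(iii), which for $\beta\notin{\mathcal L}$ forces $\ln a_{k+1}=o(q_k)$ and thereby prevents any single denominator $\langle\beta q_k\rangle\approx q_{k+1}^{-1}$ from being catastrophically small; this is exactly what fails for $\beta\in{\mathcal L}$, where one partial quotient with $\ln a_{k+1}\gg q_k$ produces a factor $\exp(\gg n\ln n)$ that makes the coefficients $a_{0,n}$ unbounded and destroys convergence. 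Choosing $\varepsilon<c$ and combining with the super-exponential Gamma decay then yields the absolute convergence claimed above.

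Finally I would identify the convergent series with $p$. Since an asymptotic expansion alone does not determine a function up to exponentially flat terms, I would instead invoke the Mellin-transform representation of $p$ from \cite{Kuz2010}: the coefficients $a_{m,n}$ are precisely the residues of the meromorphic Mellin transform $\mm(s)=\e[S_1^{s-1}]$, and Theorem~\ref{thm1} is obtained there by shifting the Mellin--Barnes inversion contour past finitely many of these poles. The convergence estimate just established shows that the remainder integral over the shifted contour tends to $0$ as the contour is pushed to infinity, so the sum of all residues equals $p(x)$ for every $x>0$. This is the same upgrade from an asymptotic to a convergent representation that was carried out for the classes $\cc_{k,l}$ in Theorem~10 of \cite{Kuz2010}, now made available for all $\alpha\notin{\mathcal L}\cup\q$ by the Diophantine bound.
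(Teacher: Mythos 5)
Your first step (absolute convergence of the double series) is fine in outline, and in fact the paper regards it as the easy part: it remarks that convergence of both series follows without much work from Lemma~\ref{lemma_trig_asymptotics}, whose content $\prod_{l=1}^{k}|\csc(\pi l \beta)|=2^{k+o(k)}$ for $\beta\notin{\mathcal L}\cup\q$ is exactly the Diophantine estimate you propose to prove by hand via the three-distance structure (the paper instead cites Hardy--Littlewood and Baxa--Schoi\ss engeier). But the paper is emphatic --- with the example $f(x)=e^{-x}+\frac{1}{x-1}\sim\sum_{n\ge1}x^{-n}$ --- that convergence of the series is not the issue; the issue is identifying the limit with $p(x)$, and this is where your argument has a genuine gap.

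The gap is the sentence ``The convergence estimate just established shows that the remainder integral over the shifted contour tends to $0$.'' The estimate you established controls the \emph{residues} of $\mm(s)$, i.e.\ the coefficients $a_{m,n}$; it says nothing about the size of $\mm(s)$ \emph{between} the poles, on the shifted lines $\re(s)=c_k$, which is what the remainder integral $I_k(x)=\frac{1}{2\pi\i}\int_{c_k+\i\r}\mm(s)x^{-s}\,\d s$ actually involves. Residue bounds cannot close this: one can add to $\mm$ an entire function with suitable vertical decay, leaving every residue (hence your whole estimate) unchanged while changing $p$ --- this is precisely the flat-function ambiguity you yourself flagged, transported to the Mellin side. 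Bounding $I_k(x)$ is the entire content of the paper's proof, and it requires tools absent from your proposal: the functional equation (6.2) of \cite{Kuz2010}, iterated $k$ times, expresses $\mm(s)$ on the line $\re(s)=c_k$ as $\mm(s+k)$ on a \emph{fixed} line multiplied by a product $F_1$ of $k$ sine ratios and a Gamma ratio $F_2$. The number theory then enters not through the coefficients but through the uniform-in-$u$ bound $|F_1(u;k)|\le\prod_{l=1}^{k}|\sec(\pi l/\alpha)|=2^{k+o(k)}$ (Lemma~\ref{lemma_trig_asymptotics} applied to $\alpha^{-1}$), while Stirling's formula gives $|F_2(u;k)|\lesssim(1+|u|)\exp\left(-\frac{\alpha-1}{\alpha}(k-1)\ln(k-1)+2k|A|\right)$, and the exponential decay of $\mm$ along vertical lines (Lemma~3 of \cite{Kuz2010}) makes the $u$-integral finite; only then does $I_k(x)\to0$ follow. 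So your intuition that ``the same estimates control the remainder'' is morally right --- the same sine products and the same Gamma-ratio decay reappear --- but they must be established for $\mm(s)$ on the moving contour via its recurrence, not inferred from the coefficient bounds, and that step is missing.
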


Before we prove Theorem \ref{thm_main}, let us establish the following result.
\begin{lemma}\label{lemma_trig_asymptotics}
 Assume that $x \notin {\mathcal L} \cup \q$.  Then 
\beq\label{asymptotics_sec}
  \prod\limits_{l=1}^k   |\sec(\pi l x  ) | &=& 2^{k+o(k)}, \;\;\; k \to +\infty, \\
\label{asymptotics_csc}
\prod\limits_{l=1}^k   |\csc(\pi l x  ) | &=& 2^{k+o(k)}, \;\;\; k \to +\infty.
\eeq
\end{lemma}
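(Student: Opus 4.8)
The plan is to pass to logarithms and to reduce both assertions to the single estimate
\[
\log\prod_{l=1}^k|2\sin(\pi l x)| = o(k), \qquad k\to+\infty,
\]
for every $x\notin{\mathcal L}\cup\q$. Granting this, (\ref{asymptotics_csc}) is immediate, since $\prod_{l=1}^k|\csc(\pi l x)| = 2^k\big/\prod_{l=1}^k|2\sin(\pi l x)| = 2^{k+o(k)}$. For (\ref{asymptotics_sec}) I would use the duplication formula $2\cos\theta = 2\sin(2\theta)/(2\sin\theta)$, which gives $\prod_{l=1}^k|2\cos(\pi l x)| = \prod_{l=1}^k|2\sin(\pi l (2x))|\big/\prod_{l=1}^k|2\sin(\pi l x)|$. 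By Proposition \ref{prop_set_L}(i) we have $2x\notin{\mathcal L}\cup\q$, so both products are $2^{o(k)}$; hence $\prod_{l=1}^k|2\cos(\pi l x)|=2^{o(k)}$ and therefore $\prod_{l=1}^k|\sec(\pi l x)| = 2^k/2^{o(k)} = 2^{k+o(k)}$. Thus only the sine product needs attention.

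To establish the displayed estimate I would read it as an equidistribution statement for $g(t)=\log|2\sin(\pi t)|$, which is Riemann integrable on $[0,1]$ with $\int_0^1 g(t)\,\d t = 0$ but has a logarithmic singularity at the integers. Fix $\delta\in(0,\tfrac12)$ and let $g_\delta(t)=\max\{g(t),\log(2\sin(\pi\delta))\}$, a bounded continuous $1$-periodic function. Since $x$ is irrational, Weyl's theorem gives $\frac1k\sum_{l=1}^k g_\delta(lx)\to\int_0^1 g_\delta(t)\,\d t$, and $\int_0^1 g_\delta\to\int_0^1 g = 0$ as $\delta\to0$ by monotone convergence. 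As $g\le g_\delta$, this already produces the upper bound $\limsup_k \frac1k\log\prod_{l=1}^k|2\sin(\pi lx)|\le 0$; the real content of the lemma is the matching lower bound, i.e.\ control of the singular defect $\frac1k\sum_{l=1}^k (g_\delta-g)(lx)$, which is supported on the indices $l$ with $\langle lx\rangle<\delta$.

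Using $\sin(\pi u)\ge 2u$ on $(0,\tfrac12)$, that defect is bounded by $\frac1k\sum_{l:\langle lx\rangle<\delta}\log(1/\langle lx\rangle)$ plus a term that vanishes as $\delta\to0$. Writing $N(t)=\#\{1\le l\le k:\langle lx\rangle<t\}$ and summing by parts, the remaining sum is comparable to $\frac{N(\delta)}{k}\log(1/\delta)+\frac1k\int_0^\delta \frac{N(t)}{t}\,\d t$; by equidistribution the first term tends to $2\delta\log(1/\delta)\to 0$, so everything hinges on the small-$t$ behaviour of $N(t)$. Here the arithmetic of $x$ enters through the continued fraction machinery of the preceding pages. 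By the best-approximation property (Theorem 17 in \cite{Khinchin}), the bounds (\ref{eqn_nth_conv_error}) and the recurrence (\ref{continued_fraction_recurrence}), the indices $l\le k$ realising a small $\langle lx\rangle$ are, up to the three-distance structure, multiples of the convergent denominator $q$ with $1/q$ near $t$, and $\langle j q x\rangle\approx j\langle qx\rangle$. Proposition \ref{prop_set_L}(iv) furnishes $\min_{l\le k}\langle lx\rangle\ge e^{-\epsilon k}$ for every $\epsilon>0$ and all large $k$, so no single factor is smaller than $e^{-\epsilon k}$; and Proposition \ref{prop_set_L}(iii), in its equivalent form $\log a_{n+1}=o(q_n)$ (which is exactly what $x\notin{\mathcal L}$ asserts), bounds the block count by $k/q\le a_{n+1}+1$. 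Feeding $\langle j qx\rangle\approx j\langle qx\rangle$ into $\sum_j\log(1/(j\langle qx\rangle))=\sum_j\big(\log(1/\langle qx\rangle)-\log j\big)$ and using $\log(1/\langle qx\rangle)=O(\log q_{n+1})=o(q_n)$, each scale contributes only $o(k)$, which gives the desired lower bound.

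The main obstacle is precisely this control of the singular defect, and it is the only place where $\alpha\notin{\mathcal L}$ is used in an essential way. The naive estimate — each of the $O(\delta k)$ singular terms is at worst $-\epsilon k$ — only yields $O(\epsilon k^2)$ and is worthless; the cancellation has to be extracted honestly from the spreading of the points $\{lx\}$. The genuine difficulty is that the small values $\langle lx\rangle$ live on a whole hierarchy of scales attached to successive convergents, and one must show that on each scale they form approximately the arithmetic progression $\langle qx\rangle,2\langle qx\rangle,\dots$ whose logarithms telescope, while keeping the resulting multiscale bookkeeping uniform in $\delta$ as $k\to\infty$. Excluding ${\mathcal L}$ is exactly what makes every scale contribute $o(k)$: for $x\in{\mathcal L}$ a single convergent with $a_{n+1}$ of size $b^{q_n}$ makes one factor as small as $b^{-q_n}$ with $q_n$ a positive proportion of $k$, so $\prod_{l=1}^k|\csc(\pi l x)|$ would be of order $2^{(1+c)k}$ with $c>0$, violating the claimed $2^{k+o(k)}$ — which is why such $\alpha$ must be excluded.
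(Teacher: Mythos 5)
Your two reductions are correct and clean: (\ref{asymptotics_csc}) follows at once from the estimate $\sum_{l\le k}\ln|2\sin(\pi lx)|=o(k)$, and the doubling identity together with Proposition \ref{prop_set_L}(i) (applied to rule out $2x\in{\mathcal L}$) correctly reduces (\ref{asymptotics_sec}) to that same estimate for $x$ and $2x$; the upper bound $\limsup_k\frac1k\sum_{l\le k}\ln|2\sin(\pi lx)|\le 0$ via Weyl equidistribution of the truncations $g_\delta$ is also fine. But note that this is a different route from the paper, which never passes to sine products: it bounds $|\sec(\pi Nx)|$ by $2/|\{Nx\}-\frac12|$, uses $|\{y\}-\frac12|\ge\langle 2y\rangle/2$ and Proposition \ref{prop_set_L}(iv) to verify only that $\ln|\sec(\pi Nx)|/N\to 0$, and then invokes Theorem 1 of \cite{Baxa2002} (and Theorem 2 of \cite{HarLit1946} for the cosecant) to convert this subexponential-growth hypothesis into the Ces\`aro limit $\frac1k\sum_{l\le k}\ln|\sec(\pi lx)|\to\ln 2$. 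In other words, the entire analytic core of the lemma is delegated to those cited theorems, whereas you attempt to reprove it.

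That is where the genuine gap lies: your control of the ``singular defect'' is a sketch whose stated estimates do not close. Concretely: (a) the structural claim that the indices $l\le k$ realizing small $\langle lx\rangle$ are multiples of a single denominator $q$ is not correct across scales --- indices of the form $cq_i+m$ with $m$ supported on higher convergents also produce scale-$i$ values, so the counts you use require the full Ostrowski/three-distance hierarchy, not just (\ref{eqn_nth_conv_error}) and (\ref{continued_fraction_recurrence}); (b) your block count ``$k/q\le a_{n+1}+1$'' is valid only for the top scale $q_n\le k<q_{n+1}$, where indeed $(k/q_n)\ln(1/\langle q_nx\rangle)\lesssim(k/q_n)\,\epsilon_nq_n=\epsilon_nk$ with $\epsilon_n=\ln q_{n+1}/q_n\to0$; and (c) applying that same bound at every scale gives per-scale contributions $\epsilon_ik$ where $\epsilon_i\to0$ as $i\to\infty$ (not as $k\to\infty$), and these need not be summable over the scales below $k$: taking $\ln a_{i+1}\approx q_i/i$ gives an $x\notin{\mathcal L}\cup\q$ with $\sum_i\epsilon_i=\infty$, so the sum of your per-scale bounds is not $o(k)$. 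To repair this one must exploit, at every scale and not only the top one, the cancellation $\sum_c\bigl(\ln(1/\langle q_ix\rangle)-\ln c\bigr)$ over the full digit range $c\le a_{i+1}$ --- which cancels the $\ln a_{i+1}$ and leaves a summable $k\ln q_i/q_i$ per scale, with $x\notin{\mathcal L}$ needed only for the incomplete boundary scale. That bookkeeping is precisely the content of the theorems of Hardy--Littlewood and Baxa--Schoi\ss engeier; since you neither carry it out nor cite it, the proof is incomplete exactly at the point you yourself identify as the main obstacle.
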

\begin{proof} 
Let us prove (\ref{asymptotics_sec}). We use the following easily verified fact
\beqq
 \bigg |\{y\}-\frac12 \bigg |=\left \langle y - \frac 12 \right \rangle \ge  \frac{\langle 2 y \rangle}{2}, \;\;\; y \in \r
\eeqq
and Proposition \ref{prop_set_L} (iv) to check that for all $x\notin {\mathcal L} \cup \q$
 \beqq
 \lim\limits_{N\to +\infty} \frac{\ln | \{N x\} - \frac12|}{N} = 0.
 \eeqq
 Since for all $x \notin {\mathbb Z}+\frac12$ it is true that
\beqq
\big |\sec(\pi x) \big |< \frac{2}{|\{x\}-\frac12|},
\eeqq
we conclude that
 \beqq
 \lim\limits_{N\to +\infty} \frac{\ln | \sec(\pi N x)|}{N} = 0.  
 \eeqq
 Applying Theorem 1 from \cite{Baxa2002} 
we see that the above statement implies
\beqq
 \lim\limits_{k\to +\infty} \frac{1}{k} \sum\limits_{l=1}^k \ln  |\sec(\{\pi l x \} ) | = \ln(2),
 \eeqq 
which is equivalent to (\ref{asymptotics_sec}). 
Formula (\ref{asymptotics_csc}) can be verified in the same way; it also follows from Theorem 2 in \cite{HarLit1946}.
\end{proof}

\label{discussion1}
Using the results of Lemma \ref{lemma_trig_asymptotics} it is not hard to prove that both series in the right-hand side of (\ref{eqn_p_0_infty}) converge for
all $x>0$. However this does not guarantee that the series converges to $p(x)$. As an example, consider the asymptotic expansion
\beqq
f(x)=e^{-x} + \frac{1}{x-1} \sim \sum\limits_{n\ge 1} x^{-n}, \;\;\; x\to +\infty.
\eeqq
The asymptotic series in the right-hand side of the above equation converges for all $x>1$, but the limit is not equal to $f(x)$. Therefore, in order to prove 
Theorem \ref{thm_main} we would need to do some more work. 

Our main tool will be the Mellin transform of $S_1$, which is defined as
\beqq
\mm(w)=\e[S_1^{w-1}]=\int\limits_{\r^+} p(x) x^{w-1} \d x, \;\;\; \re(w)=1.
\eeqq
This function was studied in \cite{Kuz2010}, where it was proved that it can be analytically continued to a meromorphic function. There exists an explicit expression
(Theorem 8 in \cite{Kuz2010})
 for $\mm(s)$ in terms of the double gamma function, see \cite{Barnes1899} and \cite{Barnes1901} for the definition and properties of the double gamma function. It is also known that $\mm(s)$ satisfies several functional equations (see Theorem 7 in \cite{Kuz2010}) and if $\alpha \notin \q$ and $X \notin \cc_{k,l}$ it has simple poles at the points 
\beqq
 \{s^+_{m,n}\}_{m\ge 1, n\ge 1}=\{m+\alpha n\}_{m\ge 1, n\ge 1}, \qquad \{s^-_{m,n}\}_{m\ge 0, n\ge 0}=\{1-\alpha \rho -m- \alpha n\}_{m\ge 0, n\ge 0},
 \eeqq
with  residues given by 
\beqq
\textnormal{Res}\left(\mm(s) : s^+_{m,n} \right)=-b_{m-1,n}, \qquad \textnormal{Res}\left(\mm(s) : s^-_{m,n} \right)=a_{m,n},
\eeqq
see Lemma 2 in \cite{Kuz2010}.

\vspace{0.5cm}

{\it Proof of Theorem \ref{thm_main}:} In what follows we will always use the principal branch of the logarithm, which is defined in the domain $|\arg(z)|<\pi$ 
by requiring that $\ln(1)=0$. Similarly, the power function will be defined as $z^a=\exp(a\ln(z))$ in the domain $|\arg(z)|<\pi$.

Assume that $\alpha \notin {\mathcal L}\cup \q$, $\alpha \in (1,2)$ and define $c_k=1-\alpha \rho+\frac{\alpha}2-k$.
 Note that $c_k \ne s^{-}_{m,n}$ for all $k,m,n\ge 0$; if this was not true then $\alpha$ would be a rational number. Therefore $\mm(s)$ does not
have singularities on the lines $c_k+\i \r$ for all $k\ge 0$. 

Following the proof of Theorem 9 in \cite{Kuz2010}, we start with the inverse Mellin transform representation
\beqq
 p(x) = \frac{1}{2\pi \i} \int\limits_{1+\i \r} \mm(s) x^{-s} \d s, \;\;\; x>0.
\eeqq
The integral in the right-hand side of the above formula converges absolutely, since
$\mm(s)\to 0$ exponentially fast as $\im(s) \to \infty$, see Lemma 3 in \cite{Kuz2010}. 
Shifting the contour of integration $1+\i \r \mapsto c_k + \i \r$ and taking into account the residues at the poles $s=s^{-}_{m,n}$, we find that 
\beq\label{p_shift_contour}
p(x)=\sum \textnormal{Res}(\mm(s): s^-_{m,n}) \times x^{-s^-_{m,n}}+ \frac{1}{2\pi \i} \int\limits_{c_k+\i \r} \mm(s) x^{-s} \d s.
\eeq
where the summation is over all $m\ge 0, n \ge 0$, such that $s^-_{m,n}>c_k$. 
Now our goal is to prove that as $k\to +\infty$, the integral in the right-hand side of (\ref{p_shift_contour}) -- which we will denote by $I_k(x)$ -- converges to zero for every $x>0$.

First, we perform a change of variables 
\beq\label{eqn_Ix}
I_k(x)=\frac{1}{i}\int\limits_{c_k+\i \r} \mm(s) x^{-s} \d s= x^{-1+\alpha \rho-\frac{\alpha}2+k}\int\limits_{\r} \mm\left(1-\alpha \rho+\frac{\alpha}2-k+\i u\right) x^{-\i u} \d u.
\eeq 
Using equation (6.2) in \cite{Kuz2010} and the reflection formula for the Gamma function (see formula (8.334.3) in  \cite{Jeffrey2007}) we find that
\beqq
 \frac{\mm(s)}{\Gamma(s)\Gamma\left(\frac{1-s}{\alpha} \right)} = -\left[ \frac{\mm(s+1)}{\Gamma(s+1)\Gamma\left(\frac{1-(s+1)}{\alpha} \right)} \right]
 \frac{\sin\left(\frac{\pi}{\alpha} (1-s) \right)}
{\sin\left(\frac{\pi}{\alpha} (\alpha \rho-1+s) \right)}, \;\;\; s\in \c.
\eeqq
Iterating the above identity $k$ times gives us
\beq\label{mms_iteration}
\frac{\mm(s)}{\Gamma(s)\Gamma\left( \frac{1-s}{\alpha} \right)}=(-1)^k \left[ \frac{\mm(s+k)}{\Gamma(s+k)\Gamma\left( \frac{1-k-s}{\alpha} \right)} \right]
 \prod\limits_{j=1}^k \frac{\sin\left(\frac{\pi}{\alpha} (2-j-s) \right)}
{\sin\left(\frac{\pi}{\alpha} (\alpha \rho-2+j+s) \right)}, \;\;\; s\in \c. 
\eeq
Next, we use the fact that $\alpha \rho  \le 1$ for $(\alpha,\rho)\in \aa$, 
therefore $-1+\alpha \rho-\frac{\alpha}2<0$ while for $k\ge 2$ we have $-1+\alpha \rho-\frac{\alpha}2+k>0$, which in turn implies the estimate  
\beq\label{estimate_x}
x^{-1+\alpha \rho-\frac{\alpha}2+k}< (1+x)^k, \;\;\; k \ge 2, \;\;\; x > 0. 
\eeq 
 We express $\mm(s)$ in terms of $\mm(s+k)$ with the help of (\ref{mms_iteration}), substitute the resulting expression into (\ref{eqn_Ix}) and
use (\ref{estimate_x}) to
obtain the following estimate
\beq\label{eqn_estimate_1}
|I_k(x)| < (1+x)^{k} \int\limits_{\r} \big |\mm\left(1-\alpha \rho+\frac{\alpha}2+\i u\right) \big | \times |F_1(u;k)| \times |F_2(u;k)| \d u, \;\; k \ge 2, \;\;\; x>0, 
\eeq
where we have denoted
\beq\label{def_F1}
F_1(u;k)=\prod\limits_{j=1}^k \frac{\sin\left(\frac{\pi}{\alpha} \left( \alpha \rho - \frac{\alpha}2 - \i u+ k +1 -j \right) \right)}
{\sin\left(\frac{\pi}{\alpha} \left( \frac{\alpha}2 + \i u - k -1 + j\right) \right)}, 
\eeq
and
\beq\label{def_F2}
F_2(u;k)= \frac{\Gamma(1-\alpha \rho+\frac{\alpha}2+\i u-k)\Gamma\left( \rho-\frac{1}2-\i \frac{u}{\alpha}+\frac{k}{\alpha}\right)}
{\Gamma(1-\alpha \rho+\frac{\alpha}2+\i u)\Gamma\left(\rho-\frac{1}2-\i \frac{u}{\alpha} \right)}. 
\eeq

First, let us estimate $F_1(u;k)$ as $k\to +\infty$. Our goal is to prove that as $k\to +\infty$ the function $F_1(u;k)$ does not grow faster than some exponential function of $k$, and that this bound is uniform in $u\in \r$.  From the trigonometric identities
\beqq
  \sin(x+\i y)&=&\sin(x)\cosh(y) + \i \cos(x) \sinh(y), \\
  | \sin(x+\i y)|^2 &=& \cosh(y)^2 - \cos(x)^2
\eeqq
we see that $|\sin(x) | \cosh(y) \le |\sin(x+\i y)| \le \cosh(y)$, therefore
\beqq
\bigg |\frac{\sin(a+\i y)}{\sin(b+\i y)} \bigg | \le \frac{1}{|\sin(b)|}.
\eeqq
Combining the above estimate with (\ref{def_F1}) we see that
\beq\label{F_1_estimate_1}
|F_1(u;k)| \le   \prod\limits_{j=1}^k \bigg |\csc\left(\frac{\pi}{\alpha} \left(\frac{\alpha}2 - k -1 + j \right) \right) \bigg | 
=\prod\limits_{l=1}^k 
 \bigg |\sec\left(\frac{\pi l}{\alpha}  \right)\bigg |,
\eeq
where in the last step we have changed the index of summation $l=k+1-j$.  As we have established in the Proposition \ref{prop_set_L}, 
$\alpha \notin {\mathcal L} \cup \q$ implies $\alpha^{-1} \notin {\mathcal L} \cup \q$. Therefore the inequality (\ref{F_1_estimate_1}) 
and Lemma \ref{lemma_trig_asymptotics} tell us 
that there exists a constant $C_1=C_1(\alpha)>0$ such that 
\beq\label{F_1_estimate}
|F_1(u,k)|< C_1 3^k, \;\;\; k\ge 2, \;\;\; u \in \r. 
\eeq

Now we will deal with  $F_2(u;k)$ defined by (\ref{def_F2}). Our goal is to prove that as $k \to +\infty$ this function converges to zero faster than any 
exponential function of $k$, and that this happens uniformly in $u \in \r$. More precisely, we will prove that there exists a constant $C_2=C_2(\alpha)>0$ such that 
\beq\label{F2_estimate_final}
|F_2(u;k)| < C_2 (1+|u|) \exp\left(-\frac{\alpha-1}{\alpha} (k-1) \ln (k-1)+2k|A|\right), \;\;\; k \ge 2, \;\;\; u \in \r,
\eeq
where $A=(1+\ln(\alpha)-\alpha)/\alpha$.

Using the reflection formula for the gamma function we rewrite $F_2(u;k)$ as
\beqq
F_2(u;k)= (-1)^k g(u) \times \frac{\Gamma\left(  \rho-\frac{1}2-\i \frac{u}{\alpha}+\frac{k}{\alpha}\right)}
{ \Gamma \left( \alpha \rho - \frac{\alpha}{2} - \i u + k\right)},
\eeqq
where 
\beqq 
g(u)=\pi  \left[ \sin\left(\pi\left( \alpha \rho - \frac{\alpha}{2} - \i u\right) \right)
\Gamma\left(1-\alpha \rho + \frac{\alpha}{2} + \i u\right) \Gamma\left(\rho-\frac{1}2-\i \frac{u}{\alpha} \right) \right]^{-1}.
\eeqq
Next we use the following asymptotic expression (see formula (8.328.1) in \cite{Jeffrey2007})  
\beqq
|\Gamma(x+\i y)| \sim  \sqrt{2\pi} |y|^{x-\frac12} e^{-\frac{\pi}2 |y|}, \;\;\; y\to \infty, 
\eeqq
and the fact that $|\sin(x+\i y)|\sim \exp(|y|)/2$ as $y\to \infty$ to obtain 
\beq\label{big_asympt_estimate}
|g(u)|
 &\sim& \pi \left[ \frac12 e^{\pi |u|} \times \sqrt{2\pi} |u|^{\frac12-\alpha \rho+\frac{\alpha}2} e^{-\frac{\pi}2 |u|} \times
 \sqrt{2\pi} \bigg |\frac{u}{\alpha}\bigg |^{\rho-1} e^{-\frac{\pi}2 \frac{|u|}{\alpha}} \right]^{-1}  \\ \nonumber
&=&  |\alpha|^{\rho-1} |u|^{(\alpha-1)\left(\rho - \frac12\right)} e^{-(\alpha-1) \frac{\pi |u|}{2\alpha}},
\eeq
as $u\to \infty$.
We use the inequality $(\alpha-1)\left(\rho - \frac12\right)<\alpha-1<1$ and the fact that 
$g(u)$ is continuous in $u$ to  conclude
that there exists a constant $C_3=C_3(\alpha)>0$ such that 
\beq\label{big_asympt_estimate2}
g(u)
 < C_3 (1+|u|) e^{-(\alpha-1) \frac{\pi |u|}{2\alpha}}, \;\;\; u \in \r.
\eeq
Next, from the Stirling's asymptotic formula for the Gamma function (see formula (8.237) in \cite{Jeffrey2007}) we find that
\beqq
 \frac{\Gamma\left(\frac{s}{\alpha}\right)}{\Gamma(s)} = 
\sqrt{\alpha} \exp\left[ - s \left(  \frac{\alpha-1}{\alpha} \ln(s)  + A \right) + O(s^{-1})\right], \;\;\; s\to \infty, \; \re(s) \ge  0,
\eeqq
 where $A=(1+\ln(\alpha)-\alpha)/\alpha$ (note that $A<0$).
Again, the function in the left-hand side of the above equation is continuous in the half-plane $\re(s) \ge 0$; this fact and the above asymptotic formula
imply that  there exists a constant $C_4=C_4(\alpha)>0$ such that 
\beqq
\bigg | \frac{\Gamma\left(\frac{s}{\alpha}\right)}{\Gamma(s)} \bigg |  < C_4 
\bigg | \exp\left[ - s \left(  \frac{\alpha-1}{\alpha} \ln(s)  + A\right) \right] \bigg |, \;\;\; \re(s) \ge 0.
\eeqq
We set $s=\alpha \rho - \frac{\alpha}{2} - \i u + k$ (note that for $k \ge 2$ we have $\re(s)>0$)  and use (\ref{big_asympt_estimate2}) and the above estimate to conclude that for $k\ge 2$
\beq\label{eq1}
|F_2(u;k)| &=& |g(u)| \times \bigg | \frac{\Gamma\left(\frac{s}{\alpha}\right)}{\Gamma(s)} \bigg |  \\ \nonumber
 &<& C_5 (1+|u|) 
  \exp\left[-(\alpha-1)  \frac{\pi |u|}{2\alpha}+ \re \left\{ - s \left( \frac{\alpha-1}{\alpha} \ln(s)  + A \right) \right\}\right] \\ \nonumber
&=&
 C_5 (1+|u|) 
\exp \left[ -(\alpha-1)  \frac{\pi |u|}{2\alpha} - \re(s) \left(\frac{\alpha-1}{\alpha} \ln|s| + A \right) - u \frac{\alpha-1}{\alpha} \arg(s) \right] \\ \nonumber
&<&
C_5 (1+|u|)
\exp \left[ - \re(s) \left(\frac{\alpha-1}{\alpha} \ln|s| + A \right)\right]
\eeq
where $C_5=C_3 \times C_4$ and in the last step we have used the fact that $|\arg(s)|<\pi /2$. Using the following facts: (i)
$|s|>\re(s)>k-1$ and (ii) $\re(s)< 2k$  
we check that
\beq\label{eq2}
\exp \left[ - \re(s) \left(\frac{\alpha-1}{\alpha}\ln|s| + A \right)\right] < 
\exp \left[ - \frac{\alpha-1}{\alpha}  (k-1) \ln(k-1)  + 2k|A| \right]
\eeq
and combining (\ref{eq1}) and (\ref{eq2}) we finally conclude that  (\ref{F2_estimate_final}) is true with $C_2=C_5$.

Combining (\ref{eqn_estimate_1}), (\ref{F_1_estimate}) and (\ref{F2_estimate_final}) we see that 
\beqq
|I_k(x)|&<& C_6 \left( 3 (1+x) e^{2 |A|} \right)^k  e^{- \frac{\alpha-1}{\alpha}  (k-1) \ln(k-1)}, \;\;\; k \ge 2,
\eeqq
where
\beqq
 C_6=C_5 \times C_1 \times \int\limits_{\r}  \big | \mm\left(1-\alpha \rho+\frac{\alpha}2+\i u\right) \big | \times (1+|u|)  \d u. 
\eeqq
Therefore $I_k(x) \to 0$ as $k\to +\infty$ and we have proved the second series representation in
(\ref{eqn_p_0_infty}). The proof of the first series representation in  (\ref{eqn_p_0_infty}) is identical, except that we have to shift the contour of integration in the opposite direction. 

\qed

\vspace{0.5cm}
 Theorem \ref{thm_main} gives us a convergent series representation for $p(x)$ provided that $\alpha \notin {\mathcal L} \cup \q$. Let us consider what happens
in the remaining cases. 
When $\alpha \in \q$ and $X \notin \cc_{k,l}$ for all $k,l \in {\mathbb Z}$ then Theorems \ref{thm1} and  \ref{thm_main} can not be valid for the following reason.  Using Theorem 8 in \cite{Kuz2010} and properties of the double gamma function 
(see \cite{Barnes1899} and \cite{Barnes1901}) we conclude that in this case the function $\mm(s)$ will have multiple poles on the real line. This means that the asymptotic expansions 
(\ref{eqn_p_0_asympt}), (\ref{eqn_p_infty_asympt}) would include terms of the form
\beqq
{\textnormal {Res}}(\mm(s): s=w)\times \ln(x)^{j} x^{-w},\;\;\; 0 \le j \le k-1,
 \eeqq
 where $w$ is the pole of $\mm(s)$ of multiplicity $k$. This also means that the coefficients of such an expansion, given by the residues of $\mm(s)$,  will depend on the derivatives of the double gamma function, and in this case it will be 
 much harder (maybe even impossible) to evaluate them analytically.

 In the remaining case, when $\alpha \in {\mathcal L}$, it is not clear what happens with the series representation for $p(x)$ given in (\ref{eqn_p_0_infty}). By inspecting the form of coefficients $a_{m,n}$ and $b_{m,n}$ defined in 
(\ref{def_a_mn}) and (\ref{def_b_mn}) we see that the series in (\ref{eqn_p_0_infty}) might still converge, as the growth of the product of $\sin(\cdot)$ functions in the numerator in (\ref{def_a_mn})  might ``cancel'' the growth of the denominator. To prove the convergence one would need some results on the asymptotic behavior as $k \to +\infty$ of the following product
\beqq
 \prod\limits_{l=1}^k   |\csc(\pi (l x + y)  ) |.
\eeqq
Results in \cite{Baxa2002} give us information about these products in the case when $y \in \q$ and $x \notin {\mathcal L}$, 
but we were not able to find any general results in the existing literature. 
In any case, even if one were able to prove convergence of the series (\ref{eqn_p_0_infty}) for a more general class of parameter $\alpha$, it would not imply that the series converges to $p(x)$, see the discussion on page \pageref{discussion1}. To summarize, when $\alpha \in {\mathcal L}$ the question of
validity of Theorem \ref{thm_main} is open, and there are three possible scenarios: (i) the series in (\ref{eqn_p_0_infty}) do not converge, (ii) they converge, but not to $p(x)$, (iii) they converge to $p(x)$.




\end{document}